\newtheorem{theorem}{Theorem}[section]
\newtheorem{remark}[theorem]{Remark}
\newcommand{\Row}{\text{Row}}
\newcommand{\Null}{\text{Null}}
\newcommand{\Span}{\text{Span}}
\newcommand{\Z}{\mathbb{Z}}
\newcommand{\G}{\bm{G}}
\newcommand{\bb}[2]{\bm{#1}_{#2}}
\theoremstyle{definition}
\newtheorem{definition}[theorem]{Definition}
\begin{document}

\pagenumbering{arabic}

\author{
  Jessica Wang \\
  \texttt{jwang22@math.wpi.edu}
  \and
  Joseph D. Fehribach \\
  \texttt{bach@math.wpi.edu}
}

\title
{
Prime, Composite and Fundamental \\ Kirchhoff Graphs
}

\date{}\maketitle

\begin{abstract}
A Kirchhoff graph is a vector graph
with orthogonal cycles and vertex cuts.
An algorithm has been developed
that constructs all the Kirchhoff graphs up to a fixed edge multiplicity.
This algorithm is used to explore the structure of prime Kirchhoff graph tilings.
The existence of infinitely many prime Kirchhoff graphs given a set of fundamental Kirchhoff graphs
is established,
as is the existence of a minimal multiplicity for Kirchhoff graphs to exist.
\end{abstract}

\section{Introduction}\label{Introduction}

In recent years,
Kirchhoff graphs have been studied extensively
by the second author and his colleagues and students
(see Fehribach~\cite{SIAM09JDF,AMC15JDF}, Fehribach \& McDonald~\cite{CM18FM},
Reese, Fehribach, Paffenroth \& Servatius~\cite{Symmetry16RFP,LAA18RFPS,LAA19RFPS,CM20JDF,LAA22RFP}).
Previous studies have considered the construction and properties of individual Kirchhoff graphs.
The present work considers the structure of families of Kirchhoff graphs
all of which are associated with the same set of edge vectors.
In order to do this,
we have developed a numerical method
for constructing all Kirchhoff graphs up to a certain size
for a given edge vector set.
The results of our computations
allow the definition of prime, composite and fundamental Kirchhoff graphs,
motivate the proofs of several results,
and lead to a number of open questions
regarding the structure of these Kirchhoff graph families.

A Kirchhoff graph is a vector graph
whose edges are vectors (or whose edges are assigned vectors)
that satisfy an orthogonality condition between its cycles and its vertex cuts.
There is a cycle in the graph only when the corresponding vectors add to zero in the vector space.
Consider a set $S:=\{\bm{s}_1,\bm{s}_2,\ldots, \bm{s}_n\}$ of vectors
in a vector space $\mathcal{V}$ over $\mathbb{Q}$.
These are the edge vectors for our vector graphs.
For simplicity, suppose that no vector in $S$ is a scalar multiple of another,,
and suppose there is a $k$
where $1<k<n$ so that $\{\bm{s}_1,\bm{s}_2,\ldots,\bm{s}_k\}$ forms a basis
for $\Span(S)$.
Then for $[\bm{s}_1,\;\bm{s}_2,\;\ldots\;\bm{s}_n]$, a row vector of vectors,
there is a coefficient matrix $C'$ such that $[\bm{s}_1,\;\bm{s}_2,\;\ldots\;\bm{s}_n]\cdot[C'/-I_{n-k}]=0$
where $[C'/-I_{n-k}]$ is a block matrix with $C'$ over $-I_{n-k}$.
Let $q\in\Z^+$ be the least common multiple of denominators of $C'$,
and define $C:=qC'$
so the entries of $C$ are integers.
Then define $N:=[C/-qI_{n-k}]$ as the null matrix for $S$,
and $R:=[qI_k|C]$ as the row matrix for $S$.
Specifically, the columns of $N$ form a basis for $\Null(R)$,
and the columns of $R$ can be used to represent the vectors in $S$
since all finite-dimensional vector spaces
over a given field
are isomorphic.
This means that any matrix $A$ that is row equivalent to $R$
has the same row space and null space as $R$,
a the same set of Kirchhoff graphs.

The orthogonality condition mentioned above
corresponds to orthocomplementary of the matrices $R$ and $N$.
For a vertex $v$ in a vector graph $\bm{G}$,
the \textbf{vertex cut} of $v$,
denoted $\bm{\lambda}(v)=\{\lambda_1,\ldots,\lambda_n\}$,
has entries corresponding to the vectors $\bm{s}_1,\ldots,\bm{s}_n$.
For each $i$,
entry $\lambda_i$ is the net number of times $\bm{s}_i$ exits vertex $v$.
Add $1$ to $\lambda_i$ for each copy of $\bm{s}_i$ that exits $v$;
subtract $1$ from $\lambda_i$ for each copy of $\bm{s}_i$ that enters $v$.
Thus $\lambda_i$ zero if $\bm{s}_i$ is not incident on $v$,
or if $\bm{s}_i$ enters and exits the same number of times.
A \textbf{cycle} $C$ in a vector graph $\G$ is an alternating sequence
of vertices and edges that starts and ends with the same vertex
in which no vertex appears twice except for the first and the last.
Cycles in a vector graph corresponds to linear combinations
of the edge vectors $\bm{s}_1,\ldots,\bm{s}_n$ that add to the zero vector.
The \textbf{cycle vector} for a cycle $C$,
denoted $\bm{\chi}(C)=\{\chi_1,\ldots,\chi_n\}$,
has entries corresponding to vectors $\bm{s}_1,\ldots,\bm{s}_n$.
For each $i$, entry $\chi_i$ is the net number of times $\bm{s}_i$ appears in the cycle.
Add $1$ to the $i$-th component
each time $C$ traverses an $\bm{s}_i$ in the forward direction,
and subtract $1$ for each $\bm{s}_i$ in the backward direction. 
A {\bf Kirchhoff graph}
for a set of edge vectors $S$
is then vector graph satisfying two conditions:
\begin{enumerate}
  \item For each vertex $v$ of $\bm{G}$,
        $\bm{\lambda}(v)\in\Row(R)$.
  \item For each cycle $C$ of $\bm{G}$,
        $\bm{\chi}(C)\in\Null(R)$,
        and there is a cycle basis for the cycle space of $\bm{G}$
        corresponding to a basis for $\Null(R)$.
\end{enumerate}
This implies that for each vertex and each cycle of $\bm{G}$,
$\bm{\lambda}(v)\perp\bm{\chi}(C)$.

As a simple example
of Kirchhoff graphs,
consider the matrix
$$
R_1 =
\left[\begin{array}{rrrr}
~2 & ~0 & ~1 & ~1 \\
 0 &  2 &  1 & -1
\end{array}\right]
$$
with $S_1=\{\bm{s}_1,\bm{s}_2,\bm{s}_3,\bm{s}_4\}$
and the columns of $R_1$ being a representation of the vectors of $S$.
Then two Kirchhoff graphs
for $R_1$ and $S_1$
are given in Figure~\ref{fig:F1}.
\begin{figure}[htp]
    \centering
    \includegraphics[width=5cm]{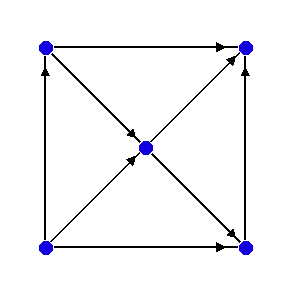}
    \includegraphics[width=6cm]{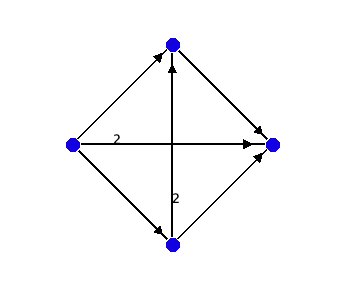}
    \caption{Kirchhoff graphs $\bm{F}_1$ (left) and $\bm{F}_2$ (right)
       for the edge vectors
       $\bm{s}_1 = [2, 0]^{\mbox{\scriptsize T}}$,
       $\bm{s}_2 = [0, 2]^{\mbox{\scriptsize T}}$,
       $\bm{s}_3 = [1, 1]^{\mbox{\scriptsize T}}$,
       $\bm{s}_4 = [1, -1]^{\mbox{\scriptsize T}}$
       embedded in the Euclidean plane.
       The small 2 on $\bm{s}_1$ and $\bm{s}_2$
       in $\bm{F}_2$ indicate that two copies
       of these edge vectors
       lie on top of each other in this Kirchhoff graph.
       Notice that their vertex cuts lie in the row space,
       and the cycles form a basis for the null space.}
    \label{fig:F1}
\end{figure}

Notice that there are two copies of each of the edge vectors
in each of the Kirchhoff graphs in Figure~\ref{fig:F1}.
A Kirchhoff graph is {\bf uniform} if and only if
each of its edge vectors appear the same number of times.
In addition,
a vector graph is {\bf vector 2-connected} if and only if
for any pair of vector edges
$\bm{s}_i$ and $\bm{s}_j$,
there exists a cycle $C$
such that the cycle vector $\bm{\chi}(C)$
is nonzero with respect to both
$\bm{s}_i$ and $\bm{s}_j$.
Reese, Fehribach, Paffenroth \& Servatius~\cite{LAA19RFPS, CM20JDF}
proved the following:
\begin{theorem}
\label{thm:uniform}
Every vector 2-connected Kirchhoff graph is uniform.
\end{theorem}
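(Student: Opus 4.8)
The plan is to translate the two Kirchhoff conditions into linear algebra over the edge set of $\bm{G}$ and then read off uniformity as a statement about a single diagonal matrix. Orient every edge of $\bm{G}$ along its assigned vector and let $M\in\{0,1\}^{n\times|E(\bm{G})|}$ be the label matrix, with $M_{i,e}=1$ iff the edge $e$ carries $\bm{s}_i$. Writing $z^{C}\in\Q^{|E(\bm{G})|}$ for the signed incidence vector of a cycle $C$ and $\delta v$ for the signed coboundary of a vertex $v$, one checks directly that $\bm{\chi}(C)=Mz^{C}$ and $\bm{\lambda}(v)=M\delta v$. The cycle space $\mathcal{Z}$ and the cut space $\mathcal{Z}^{*}:=\Span\{\delta v:v\text{ a vertex}\}$ are orthogonal complements in $\Q^{|E(\bm{G})|}$. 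The key elementary identity is $MM^{\mathsf{T}}=\operatorname{diag}(m_{1},\dots,m_{n})$, where $m_{i}$ is the number of copies of $\bm{s}_{i}$ in $\bm{G}$; so $\bm{G}$ is uniform precisely when $D:=MM^{\mathsf{T}}$ is a scalar matrix, and that is what I would establish.

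First I would pin down how $M$ and $M^{\mathsf{T}}$ interact with the cycle space. Since $\mathcal{Z}$ is spanned by incidence vectors of cycles, condition~(2) gives $M(\mathcal{Z})\subseteq\Null(R)$, and the existence of a cycle basis mapping onto a basis of $\Null(R)$ makes $M$ restrict to an isomorphism $\mathcal{Z}\cong\Null(R)$; in particular $\dim\mathcal{Z}=n-k$. Condition~(1), $\bm{\lambda}(v)\in\Row(R)=\Null(R)^{\perp}$, yields $\langle M^{\mathsf{T}}y,\delta v\rangle=\langle y,\bm{\lambda}(v)\rangle=0$ for all $y\in\Null(R)$ and all vertices $v$, hence $M^{\mathsf{T}}(\Null(R))\subseteq(\mathcal{Z}^{*})^{\perp}=\mathcal{Z}$. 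Vector $2$-connectedness forces every $\bm{s}_i$ to appear, so $M^{\mathsf{T}}$ is injective, and then $\dim M^{\mathsf{T}}(\Null(R))=n-k=\dim\mathcal{Z}$ gives $M^{\mathsf{T}}(\Null(R))=\mathcal{Z}$. Combining these, $D=MM^{\mathsf{T}}$ sends $\Null(R)=M(\mathcal{Z})=MM^{\mathsf{T}}(\Null(R))$ into itself: the diagonal matrix $D$ preserves the subspace $\Null(R)$.

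The crux is to promote ``$D$ preserves $\Null(R)$'' together with vector $2$-connectedness to ``$D$ is scalar''. Partition $\{1,\dots,n\}$ into the classes $I_{1},\dots,I_{p}$ on which $m_i$ is constant; the eigenspaces of $D$ are then the coordinate subspaces $\Q^{I_c}$, and since each spectral projection is a polynomial in $D$, invariance yields $\Null(R)=\bigoplus_{c}W_{c}$ with $W_{c}=\Null(R)\cap\Q^{I_c}$. For any cycle $C$ write $z^{C}=M^{\mathsf{T}}y$ with $y=\sum_c y_c$, $y_c\in W_c$; then $z^{C}=\sum_c M^{\mathsf{T}}y_c$ is a sum of cycle-space vectors supported on the pairwise edge-disjoint sets $E_{I_c}$ of edges with labels in $I_c$. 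A nonzero element of the cycle space has support of minimum degree at least $2$ and hence contains a cycle, whereas the edge set of the (simple) cycle $C$ is itself a single cycle graph and cannot contain two edge-disjoint cycles; so exactly one $y_c$ is nonzero, i.e.\ $\operatorname{supp}\bm{\chi}(C)\subseteq I_c$ for a single class. Thus no cycle of $\bm{G}$ is nonzero on two indices lying in different classes, and vector $2$-connectedness forces $p=1$, i.e.\ all $m_i$ are equal, which is uniformity. I expect this last structural step to be the main obstacle: invariance of $\Null(R)$ alone does not make $D$ scalar, and the point of the circulation/connectivity argument is exactly that what vector $2$-connectedness supplies are genuine cycles of $\bm{G}$, not arbitrary vectors in $\Null(R)$.
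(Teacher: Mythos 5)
First, a point of reference: this paper does not prove Theorem~\ref{thm:uniform} at all --- it quotes it from Reese, Fehribach, Paffenroth \& Servatius \cite{LAA19RFPS,CM20JDF} --- so there is no in-paper argument to compare against; the published proofs are considerably more involved than your linear-algebra reduction. Parts of your setup are correct and genuinely in the spirit of that literature: with $M$ the label matrix, condition (2) does give $M(\mathcal{Z})\subseteq\Null(R)$ (indeed $=\Null(R)$, since a cycle basis maps onto a spanning set), condition (1) does give $M^{\mathsf T}(\Null(R))\subseteq\mathcal{Z}$ (each $y\in\Null(R)$ induces a circulation that is constant on all edges carrying a given label), and these two inclusions alone already show that $D=MM^{\mathsf T}=\mathrm{diag}(m_1,\dots,m_n)$ preserves $\Null(R)$, whence $\Null(R)=\bigoplus_c W_c$ along the multiplicity classes. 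That much is sound.

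The gap is in the claims ``$M$ restricts to an isomorphism $\mathcal{Z}\cong\Null(R)$'', ``$\dim\mathcal{Z}=n-k$'', and hence ``$M^{\mathsf T}(\Null(R))=\mathcal{Z}$''. Condition (2) only asks that cycle vectors span $\Null(R)$; the cycle space of the graph has dimension $|E(\bm{G})|-|V(\bm{G})|+1$, which for a uniform graph with multiplicity $m$ is $nm-|V|+1$ and in general greatly exceeds $n-k$. Already for $\bm{F}_1$ in Figure~\ref{fig:F1} one has $|E|=8$, so equality $\dim\mathcal{Z}=2$ would force $7$ vertices, which $\bm{F}_1$ plainly does not have; correspondingly $M|_{\mathcal{Z}}$ has a large kernel (two cycles using different copies of the same labels map to the same $\bm{\chi}$). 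This false surjectivity is exactly what your final step consumes: to split a cycle you write $z^C=M^{\mathsf T}y$ with $y\in\Null(R)$, but vectors in $M^{\mathsf T}(\Null(R))$ assign the \emph{same} value to every edge carrying a given label, whereas a typical simple cycle (e.g.\ a triangle in $\bm{F}_1$ using one copy of $\bm{s}_1$ but not the other) uses only some copies of a label, so its incidence vector is not of that form. Without this decomposition the edge-disjoint-circulation argument never gets started, and the weaker decomposition $\bm{\chi}(C)=\sum_c\chi_c$ inside $\Q^n$ yields no contradiction, because the pieces $\chi_c$ are merely null-space vectors, not cycle vectors of actual cycles of $\bm{G}$ --- which is precisely the obstacle you yourself flag in your last sentence. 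So the proof is incomplete at its crux, and invariance of $\Null(R)$ under $D$ plus vector 2-connectivity, as used here, does not suffice to force $D$ to be scalar.
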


\noindent
One important way that a Kirchhoff graph will fail to be vector 2-connected
is if the matrix $C$ has a row of zeros.
All Kirchhoff graphs considered here are vector 2-connected and hence uniform.
Given that Kirchhoff graph is vector 2-connected and uniform,
let $m=m(\G)$ be the {\bf edge multiplicity}
(or simply the {\bf multiplicity})
of $\bm{G}$,
the number of times each edge vector appears in $\bm{G}$.

A second important property of Kirchhoff graphs is {\em chirality}:
Given a Kirchhoff graph $\bm{G}$ embedded in the Euclidean plane,
its {\bf chiral} graph is obtained
by rotating $\bm{G}$ through $180$ degrees
about the origin,
then reversing each edge vector.
While this process does not precisely produce the mirror image
(the meaning of the word ``chiral'' in chemistry),
it is faithful to the key idea.

\begin{theorem}
\label{thm:chiral}
If $\bm{G}$ is a Kirchhoff graph,
then so is its chiral.
\end{theorem}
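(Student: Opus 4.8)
The plan is to show that the chiral operation negates every vertex cut and every cycle vector of $\G$, after which the two Kirchhoff conditions for the chiral graph follow immediately from the facts that $\Row(R)$ and $\Null(R)$ are subspaces, hence closed under negation. So the first step is to pin down the combinatorial effect of the construction. Point reflection through the origin, $\bm{x}\mapsto-\bm{x}$, is a bijection of the Euclidean plane, so it carries the vertices and edges of $\G$ bijectively onto those of its chiral $\G'$; write $v\mapsto v'$ for the induced bijection on vertices. An edge of $\G$ that is a copy of $\bm{s}_i$ running from $u$ to $w$ is sent by the $180^{\circ}$ rotation to a segment from $u'$ to $w'$ that now points along $-\bm{s}_i$, and then reversing it yields an edge of $\G'$ that is again a copy of $\bm{s}_i$ but running from $w'$ to $u'$. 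In particular $\G'$ is a vector graph for the same edge-vector set $S$, with the same edge multiplicities, so it is uniform (and vector 2-connected) whenever $\G$ is.

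Next I would verify Condition~1. Fix a vertex $v$ of $\G$. By the description above, a given copy of $\bm{s}_i$ exits $v'$ in $\G'$ precisely when the corresponding copy of $\bm{s}_i$ enters $v$ in $\G$, and conversely; summing the $\pm1$ contributions over all edges incident to $v$ gives $\bm{\lambda}(v')=-\bm{\lambda}(v)$. Since $\bm{\lambda}(v)\in\Row(R)$ by hypothesis and $\Row(R)$ is a subspace, $\bm{\lambda}(v')\in\Row(R)$.

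For Condition~2, note that point reflection is an isomorphism of the underlying undirected graphs of $\G$ and $\G'$, so it induces a bijection $C\mapsto C'$ on cycles and an isomorphism of the corresponding cycle spaces. Following a closed walk around $C$ through the reflection, each edge traversed in the direction of its label $\bm{s}_i$ becomes an edge of $C'$ traversed against its label, and each edge traversed against its label becomes one traversed with it; hence $\bm{\chi}(C')=-\bm{\chi}(C)$, which lies in the subspace $\Null(R)$. Finally, if $\{C_1,\dots,C_d\}$ is a cycle basis of $\G$ whose cycle vectors form a basis of $\Null(R)$, then $\{C_1',\dots,C_d'\}$ is a cycle basis of $\G'$ (the reflection being a cycle-space isomorphism), and its cycle vectors $-\bm{\chi}(C_1),\dots,-\bm{\chi}(C_d)$ are again a basis of $\Null(R)$. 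Thus $\G'$ satisfies both conditions and is a Kirchhoff graph.

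The argument is essentially careful bookkeeping rather than a deep idea; the one place to be attentive is the sign tracking in the walk-by-walk comparison of $C$ and $C'$, and in particular the transfer of the cycle-basis clause of Condition~2 --- which goes through because negating the vectors of a basis leaves a basis and because point reflection induces an honest isomorphism of cycle spaces, not merely a set bijection of cycles.
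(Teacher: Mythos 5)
Your proof is correct. The paper actually states this theorem without giving any proof at all, so there is nothing to compare against; your argument supplies exactly the bookkeeping the authors leave implicit: the $180^\circ$ rotation followed by edge reversal sends a copy of $\bm{s}_i$ from $u$ to $w$ to a copy of $\bm{s}_i$ from $-w$ to $-u$, whence $\bm{\lambda}(v')=-\bm{\lambda}(v)$ and $\bm{\chi}(C')=-\bm{\chi}(C)$, and both Kirchhoff conditions (including the cycle-basis clause) survive because $\Row(R)$ and $\Null(R)$ are subspaces and negating a basis yields a basis.
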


\noindent
A Kirchhoff graph is a \textbf{self-chiral}
if and only if its chiral is itself.
In other words,
it is invariant under the chiral action.

\section{Finding Kirchhoff Graphs Using Uniformity}
\label{construction}

For uniform Kirchhoff graphs,
one can use their uniformity
as a basis for a backtracking exhaustive search construction algorithm.
Let a set of edge vectors $S$ and thus a row matrix $R=[qI|C]$ be given.
First a list is constructed of all the vertex cuts
that both lie in $\mbox{Row}(R)$
and have entries whose absolute values are no greater than
a given multiplicity bound $m_{\text{max}}$.
Then starting from an anchor vertex,
one considers whether or not the first entry on this list
might be able to be the vertex cut
for this base vertex,
and then whether it can be the vertex cut for each
new vertices that must be in the graph
given the initial base vertex cut,
without having the total number of times any edge vector
appears in the prospective graph exceed $m_{\text{max}}$.
Whenever this multiplicity bound is reached,
the most recent vertex cut is discarded,
and the next one on the list is considered in its place.
This process continues until either a uniform Kirchhoff graph is found,
or all entries from the list have been considered.
Using this algorithm,
one can find
all of the uniform Kirchhoff graphs
whose multiplicity do not exceed the multiplicity bounded $m_{\text{max}}$,
or it can be shown that no Kirchhoff graph exists for $S$
with edge multiplicity not exceeding $m_{\text{max}}$.

The brief outline above of our algorithm
is discussed in more detail in the next subsection.
Our code that implements this algorithm in java
can be found at \url{https://github.com/Jessica-Wang-Math/Kirchhoff.git} 

\subsection{Structure of the Algorithm}
\label{algo-structure}
What follows is a somewhat more detailed description
of the backtracking exhaustive search algorithm
for a given matrix $R$ and multiplicity $m_{\text{max}}$.

\begin{enumerate}

\item
Find all possible vertex cuts
with entries between $-m_{\text{max}}$ and $m_{\text{max}}$
by finding all linear combinations of the row vectors of $R$.
Let $\Lambda$ be the list of these vertex cuts
in an arbitrary order.
Initialize $\mathbb{T}$ as an empty list for us to add potential vertices to
as graph construction continues;
this serves as our to-do list.

\item
Place a starting or anchor vertex at the origin in $k$ dimensional Euclidean space.
Because of the way that $R$ and $N$ are defined,
all of the vertices will occur that integral coordinates.
In addition,
because every Kirchhoff graph is finite,
no vertex will occur at coordinates
whose sum is negative.
In other words,
no vertex will be below or behind the anchor vertex.
 
\item
Assign the first vertex cut from $\Lambda$
to the anchor vertex,
adding in the required edge vectors
and required incident vertices.
If any of these vertices have coordinates
whose sum is negative,
delete all of these edge vectors and incident vertices,
and consider the next vertex cut from $\Lambda$.
Otherwise,
add the neighboring incident vertices to the to-do list $\mathbb{T}$.

\item
Go to the next vertex $v_i$ in the graph (according to the order in $\mathbb{T}$)
and check whether it is already in $\mbox{Row}(R)$.
Assuming that it is not,
assign the first vertex cut from $\Lambda$ to it,
and check whether any of the new incident vertices
(1) have coordinates whose sum is negative,
or (2) result in the edge vector count for any edge vector exceeding $m_{\text{max}}$.
If either of these occur,
delete all of the new edge vectors and incident vertices,
and check the next vertex cut from $\Lambda$.
If not,
add all of the new incident vertices to $\mathbb{T}$
if they are not already on the list.
Notice that some of the new incident vertices may have
previously been removed from $\mathbb{T}$,
but the newly added vectors
may imply that their vertex cuts or no longer in $\mbox{Row}(R)$.
Delete $v_i$ from $\mathbb{T}$,
and go to the next vertex on $\mathbb{T}$ after $v_i$.

\item
When the final vertex cut on $\Lambda$
is eliminated at a vertex,
that vertex is abandoned,
and we move back to the previous vertex
which is placed back at the top of the to-do list.
For this previous vertex,
we consider the next vertex cut on $\Lambda$.
The new edges and incident vertices
required for this next vertex cut are added,
the to-do list is updated,
and the process moves back
to the previous step (4) in the algorithm.

\item
The process ends
either when there are no vertices left on the to-do list $\mathbb{T}$
(in which case a Kirchhoff graph is found,
and we consider the next vertex cut from $\Lambda$
at the anchor vertex),
or when the last possible vertex cut on $\Lambda$
is eliminated at the anchor vertex
(in which case no further Kirchhoff graphs exist,
and the entire process ends).
Thus the algorithm either finds all Kirchhoff graphs $\G$
with $m(\G)\le m_{\text{max}}$.

\end{enumerate}

\subsection{Kirchhoff Graph Examples Found by the Algorithm}

The algorithm discussed above
shows that the two Kirchhoff graphs shown in Figure~\ref{fig:F1}
are in fact the only ones
for $R_1$ with $m_{\text{max}}=2$.
Given $$R_2 = \begin{bmatrix}
~2 & 0 & 1 & 1~\\
~0 & 2 & 3 & 1~
\end{bmatrix}$$
and an upper bound multiplicity $m_{\text{max}}=6$,
the algorithm finds $16$ non-trivial Kirchhoff graphs,
as shown in Figure~\ref{fig:16-graphs}.
Notice that $m_{\text{max}}=6$ is the smallest multiplicity
for any Kirchhoff graphs associated with $R_2$.
The first $8$ Kirchhoff graphs are self-chirals,
while the rest are chiral pairs.
For another example,
let $$R_3=\begin{bmatrix}
~1 & 0 & 2 & 1~\\
~0 & 1 & 1 & 2~
\end{bmatrix}$$
and let $m_{\text{max}}=6$.
In this case,
our algorithm finds $4$ prime Kirchhoff graphs,
as shown in Figure~\ref{fig:4-graphs}.
Two form a chiral pair, and two are self-chirals.

\begin{figure}[htp]
    \centering
    \includegraphics[width=12cm]{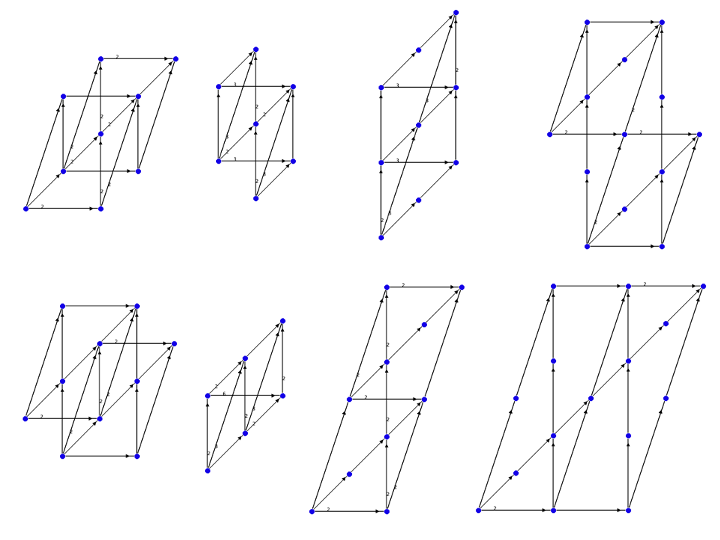}
    \includegraphics[width=12cm]{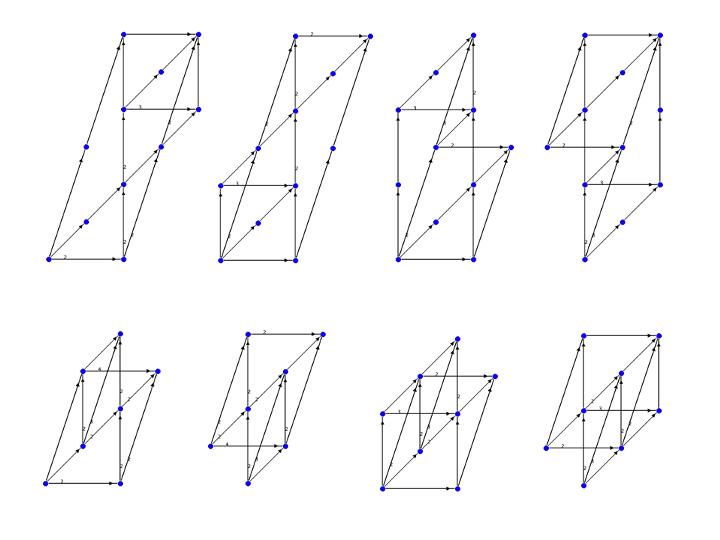}
    \caption{Sixteen prime Kirchhoff graphs for matrix $R_2$.
             The bottom eight are four chiral pairs;
             the top eight are self-chirals.}
    \label{fig:16-graphs}
\end{figure}

\begin{figure}[htp]
    \centering
    \includegraphics[height=8.0cm]{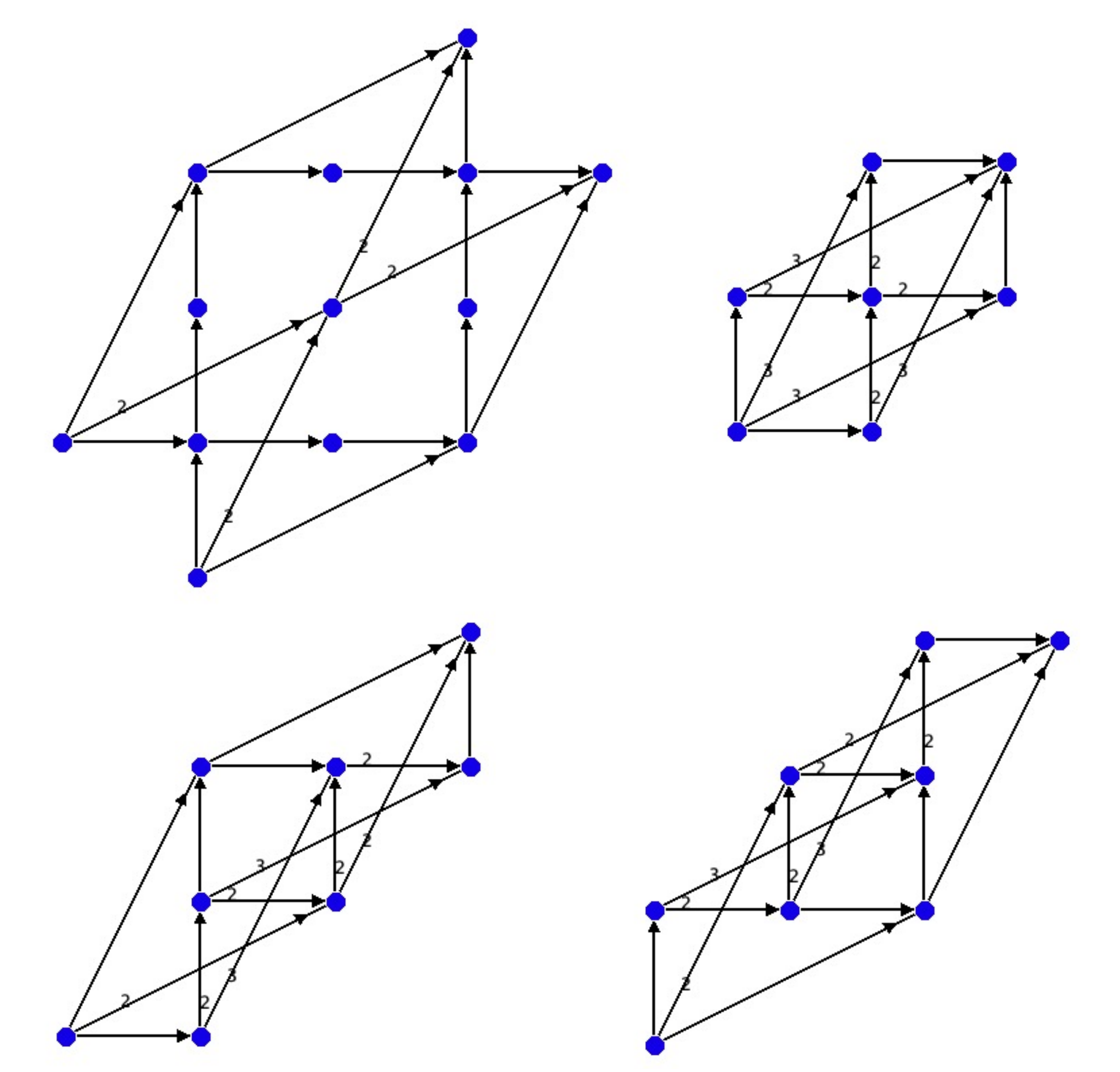}
    \caption{Four prime Kirchhoff graphs for matrix $R_3$.
             The bottom two are a chiral pair;
             the top two are self-chirals.}
    \label{fig:4-graphs}
\end{figure}

\section{Tiling of Kirchhoff Graphs}
\label{tiling}

This section considers the structure
of Kirchhoff graph families
like those shown above.
Doing this requires the concept of tiling,
which in turn requires the operations of addition and subtraction.

\begin{definition}
\label{addition}
Given two Kirchhoff graphs $\bb{G}{1},\bb{G}{2}$,
and a coordinate $\bm{x}\in\mathbb{Z}^k$,
define the {\bf sum} $(\bb{G}{1}+\bb{G}{2},\bm{x})$
as the union of $\bb{G}{1}$ with its anchor vertex placed at the origin
and $\bb{G}{2}$ with its anchor vertex placed at the coordinate $\bm{x}.$
Then $V(\bb{G}{1}+\bb{G}{2})=V(\bb{G}{1})\cup V(\bb{G}{2})$
and $m(\bb{G}{1}+\bb{G}{2})=m(\bb{G}{1})+m(\bb{G}{2})$.
So {\em all} copies of the edge vectors
from both $\bb{G}{1}$ and $\bb{G}{2}$
are present in their sum.
When it is unambiguous,
write $\bb{G}{1}+\bb{G}{2}$ as a shorthand.
Also for simplicity,
we only consider sums that preserve vector 2-connectivity.
In addition,
define the {\bf difference} $(\bb{G}{1}-\bb{G}{2},\bm{x})$
as the removal
of $\bb{G}{2}$
from $\bb{G}{1}$
assuming that there is a copy of $\bb{G}{2}$
as a subgraph of $\bb{G}{1}$
anchored at $\bm{x})$.
Collectively
the process of repeatedly adding and/or subtracting Kirchhoff graphs
is called a {\bf tiling},
and the resulting Kirchhoff graph
is called a {\bf tiling}.
\end{definition}

For the Kirchhoff graphs in Figure~\ref{fig:F1},
their sum $(\bb{F}{1}+\bb{F}{2},(1,1))$
is shown in Figure~\ref{fig:A1}.
\begin{figure}[bht]
    \centering
    \includegraphics[width=6cm]{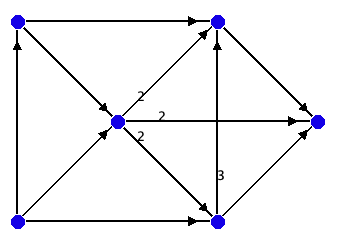}
    \caption{An addition example $(\bm{F}_1+\bm{F}_2,(1,1))$ for the row matrix $R_1$.}
    \label{fig:A1}
\end{figure}
Notice that the sum or difference of Kirchhoff graphs
must itself also be a Kirchhoff graph.
By tiling Kirchhoff graphs in various ways,
one can generate a wide variety of Kirchhoff graphs.

\begin{definition}
Given $N$ Kirchhoff graphs $\bb{G}{1},\ldots,\bb{G}{N}$,
the set of all Kirchhoff graphs
that can be constructed by tiling these $N$ Kirchhoff graphs is
$\langle\bb{G}{1},\ldots,\bb{G}{N}\rangle:=\{a_1\bb{G}{1}+\ldots+a_N\bb{G}{N}:a_i\in\Z\}$,
where $a_i\bb{G}{i}$ is any Kirchhoff graph that can be constructed
by tiling $\bb{G}{i}$,
using any anchor coordinate.
The $a_i$ can be negative
only when there exists $|a_i|$ copies of $\bb{G}{i}$ in the larger Kirchhoff graph.
\end{definition}

Now consider the principal definition of this section:
\begin{definition}
A Kirchhoff graph $\bm{G}$ is {\bf prime} if and only if
$\bm{G}$ has no nontrivial Kirchhoff subgraph decomposition.
In other words,
$\bm{G}$ cannot be written as $\bb{G}{1}+\bb{G}{2}$
where both $\bb{G}{1}$ and $\bb{G}{2}$ are nontrivial Kirchhoff graphs.
A Kirchhoff graph that is not prime is {\bf composite}.
\end{definition}

The Kirchhoff graphs shown in Figures~\ref{fig:F1}--\ref{fig:4-graphs}
are all prime;
the one in Figure~\ref{fig:A1} is of course composite.
Indeed it might seem that all Kirchhoff graphs that are tilings of smaller Kirchhoff graphs
will themselves be composite.
Perhaps surprisingly,
this is not the case:
Consider the two Kirchhoff graphs in Figure~\ref{fig:CP1}.
\begin{figure}[htb]
    \centering
    \includegraphics[width=5.7cm]{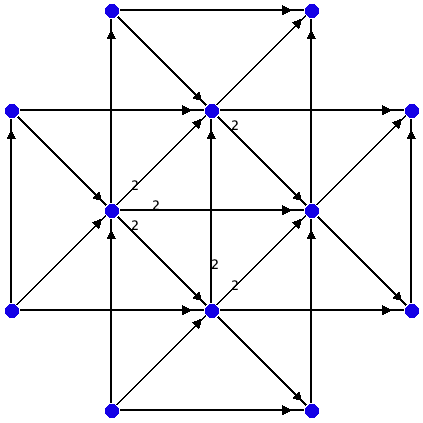}\qquad
    \includegraphics[width=5.7cm]{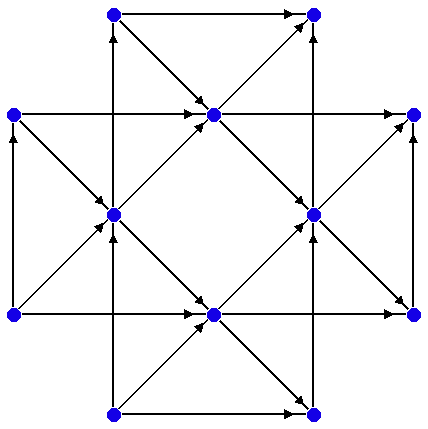}
    \caption{The Kirchhoff graph on the left is the composite $\bb{C}{1}:=4\bb{F}{1}$;
             the one on the left is the prime $\bb{P}{1}:=4\bb{F}{1}-\bb{F}{2}$.}
    \label{fig:CP1}
\end{figure}
The Kirchhoff graph on the left is a $\bb{C}{1}:=4\bb{F}{1}$
(a tiling by addition of four copies of $\bb{F}{1}$);
it is clearly composite.
The Kirchhoff graph on the right is $\bb{P}{1}:=4\bb{F}{1}-\bb{F}{2}$
(the copy of $\bb{F}{2}$ in the middle has been removed);
it is prime.
This can be seen because if any edge vector is removed,
the vertex cuts for incident vertices will no longer lie in $\mbox{Row}(R_1)$,
and the only way to return all the vertex cuts to $\mbox{Row}(R_1)$
is to remove all the edge vectors,
meaning that there is no Kirchhoff subgraph.

The above example makes clear
that composite Kirchhoff graphs
may not have unique prime decompositions.
In this example,
$\bb{C}{1}=4\bb{F}{1}=\bb{P}{1}+\bb{F}{2}$.
On the other hand,
there are infinity many prime Kirchhoff graphs.

\begin{theorem}\label{InftyPrime}
For the two prime Kirchhoff graphs $\bb{F}{1}$ and $\bb{F}{2}$
for the matrix
$$
R_1=\left[\begin{array}{rrrr}
~2 &~0 &~1&~1 \\
~0 &~2 &~1&-1
\end{array}\right]
$$
with $m_{\text{max}}= 2$
{\em (see Figure~\ref{fig:F1})},
$\langle\bb{F}{1},\bb{F}{2}\rangle$ contains infinitely many prime Kirchhoff graphs.
\end{theorem}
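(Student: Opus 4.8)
The plan is to exhibit an explicit infinite sequence $\bb{P}{1},\bb{P}{2},\bb{P}{3},\dots$ of pairwise non-isomorphic prime Kirchhoff graphs inside $\langle\bb{F}{1},\bb{F}{2}\rangle$, generalizing the prime $\bb{P}{1}=4\bb{F}{1}-\bb{F}{2}$ of Figure~\ref{fig:CP1}. The naive guesses fail, and ruling them out guides the construction: a ``strip'' of squares $n\bb{F}{1}$ in a row is composite (a terminal copy of $\bb{F}{1}$ is a clean Kirchhoff subgraph), and a larger block of squares with a \emph{single} diamond removed is just $\bb{P}{1}$ together with extra clean copies of $\bb{F}{1}$, hence again composite. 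So the holes must be spread out: I would take $\bb{P}{n}$ to be a tiling of the form $(3n+1)\bb{F}{1}-n\bb{F}{2}$ in which the $n$ removed diamonds each straddle the overlap between two consecutive $2\times 2$ sub-blocks of squares arranged in a chain, with anchor coordinates chosen so that no rectangular sub-block can be peeled off. Fixing these anchor coordinates makes each $\bb{P}{n}$ a specific element of $\langle\bb{F}{1},\bb{F}{2}\rangle$, with $\bb{P}{1}$ the graph already displayed in the text.

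The first two steps are bookkeeping. For the Kirchhoff property I would check that the prescribed arrangement is a genuine vector graph with integral coordinates and that each partial tiling along the chain preserves vector $2$-connectivity; then the remark following Definition~\ref{addition} gives that every $\bb{P}{n}$ is a Kirchhoff graph for $R_1$, and Theorem~\ref{thm:uniform} gives that it is uniform with a well-defined multiplicity. For distinctness, since $m(\bb{F}{1})=m(\bb{F}{2})=2$ and the tiling performs $3n+1$ additions and $n$ subtractions, $m(\bb{P}{n})=2(3n+1)-2n=4n+2$, which is strictly increasing in $n$; hence the $\bb{P}{n}$ are pairwise non-isomorphic and the family is genuinely infinite.

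The crux is showing each $\bb{P}{n}$ is prime, which I would do by the edge-forcing argument sketched in the text for $\bb{P}{1}$. Suppose $\bb{P}{n}=\bb{G}{1}+\bb{G}{2}$ with both summands nontrivial Kirchhoff graphs, equivalently that $\bb{G}{2}$ is a proper nonempty Kirchhoff subgraph. Deleting even one copy of an edge vector $\bm{s}_i$ from $\bb{P}{n}$ pushes the vertex cuts at both endpoints of that copy out of $\Row(R_1)$, and the only way to return them to $\Row(R_1)$ is to delete further incident copies; tracking which copies are then forced to be deleted, one shows the cascade cannot close up inside a single $2\times 2$ block (precisely because each such block is missing its diamond), so it propagates across every column of the chain. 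Hence the only edge sets whose removal leaves all vertex cuts in $\Row(R_1)$ are $\emptyset$ and all of $\bb{P}{n}$, so $\bb{P}{n}$ has no nontrivial Kirchhoff subgraph decomposition and is prime.

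The hard part is this last step together with pinning down the overlap pattern: the anchor coordinates of the $n$ diamonds must be chosen so that (i) the resulting object is still a legal, vector $2$-connected vector graph with integral vertices, and (ii) the forcing cascade is genuinely global for \emph{every} $n$, rather than verified by hand as for $\bb{P}{1}$. Excluding the possibility that some clean rectangular sub-block — or some less obvious Kirchhoff subgraph — can be detached, uniformly in $n$, is where the real work lies; once primality is established, Steps 1 and 2 are routine.
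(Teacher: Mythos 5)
Your proposal follows essentially the same route as the paper: starting from $\bb{P}{1}=4\bb{F}{1}-\bb{F}{2}$, grow an infinite chain of square blocks with the interior diamonds removed, distinguish the resulting graphs by their strictly increasing multiplicities, and argue primality by the same edge-removal cascade used for $\bb{P}{1}$. The only substantive differences are arithmetic — the paper extends by two squares and one diamond per step, giving $(2n+2)\bb{F}{1}-n\bb{F}{2}$ rather than your $(3n+1)\bb{F}{1}-n\bb{F}{2}$ — and the ``hard part'' you rightly flag (verifying that the forcing cascade is global for every $n$, so that no clean sub-block can be peeled off) is left just as informal in the paper, which simply asserts that the construction of Figure~\ref{fig:CP1} repeats.
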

\begin{proof}
Constructing arbitrarily large prime Kirchhoff graphs
that are in $\langle\bb{F}{1},\bb{F}{2}\rangle$
is a simple extension
of the construction of the prime Kirchhoff graph
in Figure~\ref{fig:CP1}. 
\begin{figure}[htb]
    \centering
    \includegraphics[width=5.7cm]{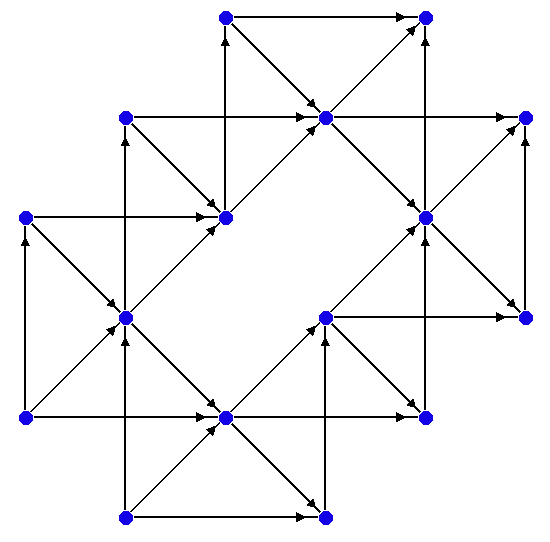}\qquad
    \includegraphics[width=5.7cm]{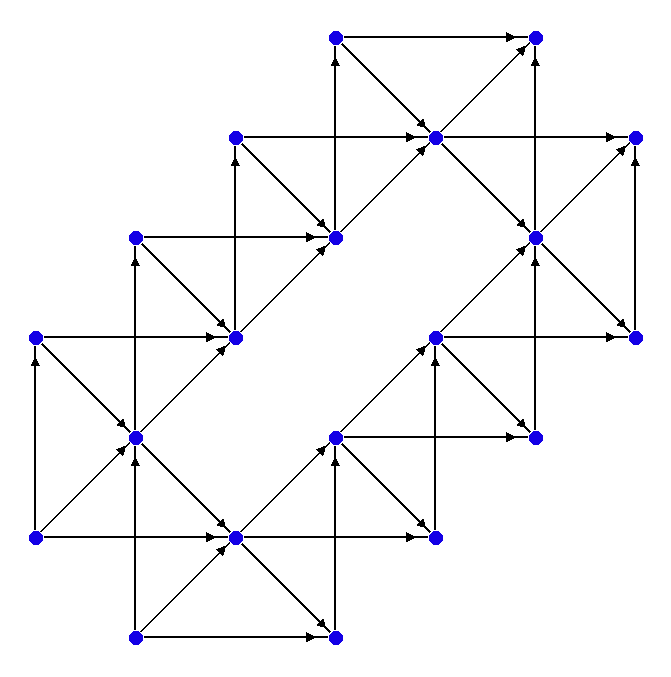}
    \caption{Two larger prime Kirchhoff graphs
             generated by tiling for $R_1$.}
    \label{fig:P2P3}
\end{figure}
Simply form $6\bb{F}{1}-2\bb{F}{2}$
by adding two more copies of $\bb{F}{1}$ 
to the prime Kirchhoff graph
in Figure~\ref{fig:CP1},
and then by subtracting $\bb{F}{2}$ from the middle.
Notice that the addition is exactly
what is needed
to create the copy of $\bb{F}{2}$
and thus make possible the subtraction.
Then repeat this tiling
until the prime Kirchhoff graph
of the desired size is achieved.
\end{proof}

\begin{remark}
Although it is hard to describe in general,
this sort of construction
of prime Kirchhoff graphs
of arbitrary size
would seem to be possible
in any $\langle\bb{K}{1},\bb{K}{2}\rangle$
for any pair of prime Kirchhoff graphs
of minimal multiplicity
for a given row matrix $R$,
particularly when the generating Kirchhoff graphs
$\bb{K}{1}$ and $\bb{K}{2}$ are self-chiral.
\end{remark}

\section{Fundamental Graphs, Tiling Structure}
\label{structure}
One final important implication of this work
is a beginning of an understanding
of the structure of Kirchhoff graph tilings.
Notice that for a given set of edge vectors $S$
and the corresponding row matrix $R$,
there is a minimum edge multiplicity number $m^*$
below which there can be no Kirchhoff graphs.
With the assumptions that no edge vector is a constant multiple of another,
the smallest possible Kirchhoff graph
is a vector triangle
having $n=3$, $k=2$ and $m^*=1$.
For $m=m^*$ there will be a finite number of Kirchhoff graphs,
and each of these will be prime
since any Kirchhoff subgraph
would have fewer than $m^*$ edge vectors.
It is frequently the case, however,
that one or more of these graphs
is in fact a tiling of others.
An example of this is shown in Figure~\ref{fig:4-graphs}
for $R_3$
where any one of the four Kirchhoff graphs
is a tiling of the other three.
This structure leads to one additional definition,
that of {\em fundamental Kirchhoff graphs}.

\begin{definition}
A {\bf fundamental set} for $R$ and $S$ is a minimal generating set
in term of tiling
with respect to both multiplicity and cardinality.
Members of a fundamental set are called {\bf fundamental Kirchhoff graphs}.
\end{definition}
\noindent
For $R_1$,
both of the Kirchhoff graphs in Figure~\ref{fig:F1} are fundamental.
For $R_3$,
any three of the Kirchhoff graphs in Figure~\ref{fig:4-graphs} are fundamental. 
For $R_2$,
no more than twelve of the sixteen are fundamental.
Based on our current computations,
all known larger Kirchhoff graphs are tilings of the graphs
in the fundamental set,
and thus all have an edge vector multiplicity
that is an integral multiple of $m^{\*}$,
though this final point remains unproven.

\section{Open Questions}
\label{OQ}
The work completed so far
on prime, composite and fundamental Kirchhoff graphs
has lead to a number of open questions:
\begin{itemize}
    \item Given a matrix $R$,
          is it possible to predict $m^*$,
          the smallest $m$
          such that nontrivial Kirchhoff graphs exist,
          without constructing the graphs?
    \item Given a matrix $R$ and minimal multiplicity $m^*$,
          how many prime Kirchhoff graphs are there?
          Our computations suggest that
          when there are two linearly independent edge vectors ($k=2$),
          this number will be some power of 2.
    \item Is there a matrix $R$
          with is a larger Kirchhoff graph $\bm{F}^+\not\in\langle\bb{F}{1},\ldots,\bb{F}{\ell}\rangle$?
    \item Is there a condition on $R$ for the existence of chiral pairs?
\end{itemize}

\section*{Acknowledgement}
The authors wish to thank Padraig \'O Cath\'ain, Randy Paffenroth and Brigitte Servatius
for many helpful discussions of this work.
\bibliographystyle{unsrt}
\bibliography{ref}

\end{document}